\newcommand{\medint}{-\kern  -,375cm\int}
\theoremstyle{plain}
\newtheorem{theorem}{Theorem}[section]
\newtheorem{corollary}[theorem]{Corollary}
\newtheorem{lemma}[theorem]{Lemma}
\newtheorem{proposition}[theorem]{Proposition}
\theoremstyle{definition}
\theoremstyle{plain}
\theoremstyle{plain}
\numberwithin{equation}{section} \makeatletter
\renewcommand{\p@enumi}{\thesection.}
\makeatother \pagestyle{myheadings} \allowdisplaybreaks
\thanks{{\em Acknowledgment:}  The authors are members of  the Gruppo Nazionale per l'Analisi Matematica, 
la Probabilit\`a e le loro Applicazioni (GNAMPA) of the Istituto Nazionale di Alta Matematica (INdAM).
The authors gladly take the opportunity to thank GNAMPA, INdAM, UNIBO, UNIFI, UNIVAQ for the support}
\title[]{Local boundedness for solutions \\ of a class of nonlinear elliptic systems}
\author[G. Cupini - F. Leonetti - E. Mascolo]{Giovanni Cupini - Francesco Leonetti - Elvira Mascolo}
\address{Giovanni Cupini: Dipartimento di Matematica, Universit\`a di
 Bologna\\ Piazza di Porta S.Donato 5,
40126 - Bologna, Italy}
\email{giovanni.cupini@unibo.it}
\address{ Francesco Leonetti:  Dipartimento di Ingegneria e Scienze dell'Informazione  e  Matematica, Universit\`a di
 L'Aquila \\ Via Vetoio snc - Coppito, 67100  - L'Aquila, Italy} 
  \email{francesco.leonetti@univaq.it}
	\address{ Elvira Mascolo:  Dipartimento di Matematica e Informatica ``U. Dini'', Universit\`a di
 Firenze\\ Viale Morgagni 67/A,
50134 - Firenze, Italy} 
 \email{elvira.mascolo@unifi.it}
\keywords{Regularity, local, bound, weak, solution, elliptic, system.}
\subjclass[2010]{Primary: 35J47. Secondary: 35B65}
\begin{document}

\begin{abstract}
In this paper we are concerned with the  regularity of solutions to  a 
nonlinear elliptic system of $m$ equations in divergence form, satisfying $p$ growth from below and $q$ growth from above, with $p \leq q$; this case is known as $p, q$-growth conditions. 
Well known counterexamples, even in the simpler case  $p=q$, show that solutions to systems may be singular; so, it is necessary to add suitable structure conditions on the system that force solutions to be regular. 
Here we obtain 
local boundedness of solutions under a componentwise coercivity condition. 
Our result  is obtained by proving that each component $u^\alpha$ of the solution $u=(u^1,...,u^m)$ 
satisfies  an improved Caccioppoli's inequality and we get the boundedness of  $u^{\alpha}$  by applying De Giorgi's  iteration method, provided the two exponents $p$ and $q$ are not too far apart.
Let us remark that, in dimension $n=3$ and when $p=q$, our result works for $\frac{3}{2} < p < 3$, thus it complements the one of Bjorn 
whose technique 
allowed her to deal with $p \leq 2$ only.
In the final section, we provide 
applications of our result.
\end{abstract}

\maketitle


\section{Introduction}
\label{Introduction}

In this paper we are concerned with the  regularity of solutions to  a 
nonlinear elliptic system of $m$ equations in divergence form 
\begin{equation}\label{dirichlet}
\displaystyle 
\sum_{i=1}^n\frac{\partial }{\partial x_i}\left(A_i^\alpha(x,Du(x))\right)=0, \quad 1\le \alpha\le m,
\end{equation} where 
$x \in \Omega$ and $\Omega$  is a  bounded open set in $\mathbb{R}^n$, $n\ge 2$. 
The function  $u:\Omega \subset \mathbb{R}^n \to \mathbb{R}^m$, 
 has components $(u^1,...,u^m)$; then $Du(x)$ is the $m \times n$ matrix $\left(\displaystyle  \frac{\partial u^\alpha}{\partial x_i}(x) \right)^{\alpha=1,...,m}_{i=1,...,n}$. 

We assume that  $A_i^{\alpha}:\Omega\times \mathbb{R}^{m \times n}\to
\mathbb{R}$,  $1\le i\le n$, $1\le \alpha\le m$, are  Carath\'eodory functions satisfying  
for  every $x\in \Omega$ and for every 
$z=(z^1,\ldots, z^m)^T\in \mathbb{R}^{m \times n}$ the following $p,q$-growth assumptions:
\begin{equation}
\label{(H1)} \nu |z^{\alpha}|^p - a(x)\le \displaystyle \sum_{i=1}^n
A_i^{\alpha}(x,z)z_i^{\alpha}\qquad \forall \alpha\in \{1,\cdots,m\},\end{equation}
\begin{equation}
\label{(H2)}\sum_{i=1}^n|A_i^{\alpha}(x,z)|\le M \left( |z|^{q-1} + b(x) \right),
\end{equation}
where  $1<p\le q$, $p < n$, $\nu,M>0$,   $a\in L_{\rm loc}^{\tau_1}(\Omega)$ and $b\in L_{\rm loc}^{\tau_2}(\Omega)$ are non-negative functions, with $1<\tau_i\le +\infty$, $i=1,2$, and $\tau_2\ge \frac{q}{q-1}$.

\medbreak

Let us recall that $u\in W_{\rm loc}^{1,q}(\Omega;\mathbb{R}^m)$ is a weak solution of \eqref{dirichlet} if
\begin{equation}
\label{weak_solution}
\int_{B} \sum\limits_{\beta = 1}^{m} \sum\limits_{i=1}^{n} A_i^{\beta}(x,Du(x)) D_i \psi^{\beta} (x) \, dx = 0,
\end{equation}
for every open set $B \Subset \Omega$ and for every $\psi \in W_{0}^{1,q}(B;\mathbb{R}^m)$.

 As usual we denote with $p^* = \frac{np}{n-p}$ the Sobolev exponent and the H\"older conjugate exponent $p^\prime = \frac{p}{p-1}$ when $p \in (1, +\infty)$. We use
  the position $\frac{1}{+\infty}=0$.

\medbreak

\noindent 
Our regularity  result is the following.

\begin{theorem} 
Assume that \eqref{(H1)} and \eqref{(H2)} hold, with  $1 < p < n$, $p\le q$ and $1<\tau_1,\tau_2\le +\infty$, 
satisfying 
\begin{equation}
\label{equiv}  
q<p^*\frac{n}{p(n+1)},\qquad  \tau_1>\frac{n}{p}, \qquad \tau_2\ge \frac{q}{q-1}.
\end{equation}
Then any  weak solution $u\in W_{\rm loc}^{1,q}(\Omega;\mathbb{R}^m)$ of \eqref{dirichlet} is locally bounded.
\label{t:boundedness}
\end{theorem}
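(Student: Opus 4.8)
The plan is to show that each component $u^\alpha$ is locally bounded: by symmetry it suffices to bound $u^\alpha$ from above on interior balls, since $-u^\alpha$ solves a system of the same structural type (the assumptions \eqref{(H1)}, \eqref{(H2)} only involve $|z^\alpha|$ and $|z|$, which are unchanged when the $\alpha$-th row of $z$ is reflected). Fix therefore $\alpha$, a level $k\ge 0$, concentric balls $B_\rho\Subset B_R\Subset\Omega$, and a cut-off $\eta\in C_c^\infty(B_R)$ with $0\le\eta\le 1$, $\eta\equiv 1$ on $B_\rho$ and $|D\eta|\le c/(R-\rho)$. Since $u\in W^{1,q}_{\rm loc}(\Omega;\real^m)$, the map $\psi$ with $\psi^\beta=0$ for $\beta\ne\alpha$ and $\psi^\alpha=(u^\alpha-k)_+\eta^q$ lies in $W_0^{1,q}(B_R;\real^m)$ and is admissible in \eqref{weak_solution}. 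Only the $\beta=\alpha$ terms survive; writing $A(k,R)=\{x\in B_R:u^\alpha(x)>k\}$ and expanding $D_i\psi^\alpha$ one gets
\begin{equation*}
\int_{A(k,R)}\Big(\sum_{i=1}^n A_i^\alpha(x,Du)\,D_iu^\alpha\Big)\eta^q\,dx=-q\int_{A(k,R)}\Big(\sum_{i=1}^n A_i^\alpha(x,Du)\,D_i\eta\Big)(u^\alpha-k)_+\eta^{q-1}\,dx.
\end{equation*}

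The next step is to derive the \emph{improved Caccioppoli inequality}. Bounding the left side from below by \eqref{(H1)} and the right side from above by \eqref{(H2)} gives
\begin{equation*}
\nu\int_{A(k,\rho)}|Du^\alpha|^p\,dx\le\int_{A(k,R)}a\,dx+\frac{c\,q\,M}{R-\rho}\int_{A(k,R)}\big(|Du|^{q-1}+b\big)(u^\alpha-k)_+\,dx.
\end{equation*}
The crucial feature is the coupling term $|Du|^{q-1}$, which involves the \emph{whole} gradient and cannot be absorbed into the left-hand side; instead, Hölder's inequality produces the factor $\big(\int_{A(k,R)}|Du|^q\big)^{1/q'}$, which is \emph{finite} because $u\in W^{1,q}_{\rm loc}$ and \emph{tends to $0$} as $k\to+\infty$, since $|A(k,R)|\to 0$. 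Estimating the $a$-term by $\|a\|_{L^{\tau_1}(B_R)}|A(k,R)|^{1/\tau_1'}$ and the $b$-term by $\|b\|_{L^{\tau_2}(B_R)}$ times appropriate powers of $\|(u^\alpha-k)_+\|$ and $|A(k,R)|$, one ends up with a Caccioppoli inequality for $(u^\alpha-k)_+$ whose right-hand side carries a genuinely small multiplicative constant when $k$ is large.

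Combining this with the Sobolev inequality applied to $(u^\alpha-k)_+\eta$ (bounding $\|(u^\alpha-k)_+\|_{L^{p^*}(B_\rho)}^p$ by $\int|Du^\alpha|^p\eta^p+\int(u^\alpha-k)_+^p|D\eta|^p$, the first integral controlled by the improved Caccioppoli, the second elementary) one sets up De Giorgi's iteration: levels $k_j=d(2-2^{-j})$, radii $\rho_j\downarrow\rho_\infty$, and a quantity $\phi_j$ equal to a suitable norm of $(u^\alpha-k_j)_+$ on $B_{\rho_j}$ (together with $|A(k_j,\rho_j)|$). Plugging the Caccioppoli--Sobolev estimate into this scheme yields a recursion $\phi_{j+1}\le C\,\mathcal{B}^{\,j}(R-\rho_\infty)^{-\kappa}\phi_j^{\,1+\delta}$; the bookkeeping of the Hölder and Sobolev exponents shows that the hypotheses \eqref{equiv}, in particular $q<p^*\frac{n}{p(n+1)}$, are exactly what make the gain exponent $\delta$ strictly positive. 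The fast geometric convergence lemma then forces $\phi_j\to 0$ as soon as $d$ is chosen large in terms of $\|u\|_{W^{1,q}(B_R)}$, $\|a\|_{L^{\tau_1}}$, $\|b\|_{L^{\tau_2}}$, giving an explicit bound for $u^\alpha$ on $B_{\rho_\infty}$. Applying the same reasoning to $-u^\alpha$, and then to every $\alpha$, yields $u\in L^\infty_{\rm loc}(\Omega;\real^m)$. The principal obstacle is precisely the full-gradient term $|Du|^{q-1}$ coming from the off-diagonal structure of the system: the entire strategy hinges on turning it, via its $L^q$-integrability and the shrinking of the super-level sets, into a vanishing coefficient rather than something to be absorbed, and then on verifying that the resulting iteration still closes under the stated gap between $p$ and $q$.
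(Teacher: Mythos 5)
Your outline follows the same route as the paper: test \eqref{weak_solution} with $\psi^\beta=\delta^{\beta\alpha}(u^\alpha-k)_+\eta$ (the power of the cutoff is immaterial), use \eqref{(H1)}--\eqref{(H2)} to obtain a componentwise Caccioppoli inequality on superlevel sets with exponent $p$ on $Du^\alpha$ but $p^*$ on $(u^\alpha-k)_+$, combine with Sobolev, run De Giorgi's iteration via the fast geometric convergence lemma, and treat the bound from below by reflection. Two caveats. First, the mechanism you stress for the off-diagonal term --- that $\|Du\|_{L^q(A(k,R))}^{q-1}$ tends to $0$ as $k\to+\infty$, so the Caccioppoli inequality carries a ``small multiplicative constant'' --- is not what closes the argument: a recursion $\phi_{j+1}\le\varepsilon B^j\phi_j$ with $B>1$ diverges no matter how small $\varepsilon$ is, so this coefficient may as well be bounded by the fixed number $\|Du\|_{L^q(B_R)}^{q-1}$. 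What actually matters, and what you assert but never verify, is that after inserting $\|(u^\alpha-k_{j+1})_+\|_{L^q}\le\phi_j^{1/p^*}\,|A(k_{j+1},\rho_j)|^{1/q-1/p^*}$ and $|A(k_{j+1},\rho_j)|\lesssim 2^{jp^*}d^{-p^*}\phi_j$, every term on the right-hand side carries $\phi_j$ to a power strictly larger than $p/p^*$; this exponent bookkeeping is the core of the proof and is precisely where \eqref{equiv} enters (e.g.\ $\tau_1>n/p$ is equivalent to $1-1/\tau_1>p/p^*$). Second, with your direct H\"older splitting the gradient term contributes $\phi_j^{1/q}$, so the requirement it imposes is $1/q>p/p^*$, i.e.\ $q<p^*/p$, which is implied by --- and in fact slightly weaker than --- $q<p^*\frac{n}{p(n+1)}$. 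The paper instead applies Young's inequality with exponents $p^*$ and $(p^*)'$ to this term, producing the remainder $\|Du\|_{L^q}^{(q-1)(p^*)'}|A(k,t)|^{1-(p^*)'/q'}$, and the condition $1-(p^*)'/q'>p/p^*$ is \emph{exactly} $q<p^*\frac{n}{p(n+1)}$. So your sketch is sound and \eqref{equiv} is amply sufficient for it, but the claim that \eqref{equiv} is ``exactly'' what your version requires is inaccurate, and the decisive computation is left undone.
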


In the vector-valued case, as suggested by well known counterexamples 
 \cite{deg}, \cite{Mazja},  \cite{giumir}, \cite{frehse1970},  \cite{nec}, \cite{John_Necas_Stara}, \cite{Soucek}, \cite{John_Maly_Stara},   \cite{landes1}, 
\cite{Hao_Leonardi_Steinhauer}, \cite{Hao_Leonardi_Necas},  \cite{sverakyan},  \cite{frehse},   \cite{MonSav},  special structures  on
the operator are required for
everywhere regularity, even under reasonable assumptions on the coefficients; see also the surveys   \cite{Mingione},  
 \cite{Mingione2} and \cite{Kristensen_Mingione}.  
 
In the  literature there are still few contributions about the boundedness of weak solutions to elliptic systems. 
Ladyzhenskaya and Ural'tseva (\cite{Lady-Ural}, Chapter 7) first
proposed the local boundedness of solutions $u=\left(
u^{1},u^{2},\ldots ,u^{m}\right) $ to the \textit{linear} elliptic
system
\begin{equation}\label{introduzione2}\begin{aligned}
\sum_{i=1}^{n}\frac{\partial }{\partial x_{i}} & \left(
\sum_{j=1}^{n}  a_{ij}\left( x\right) \,u_{x_{j}}^{\alpha
}+\sum_{\beta =1}^{m}b_{i}^{\alpha \beta }\left( x\right)
\,u^{\beta }+f_{i}^{\alpha }\left( x\right) \right)
\\ &+\sum_{i=1}^{n}\sum_{\beta =1}^{m}c_{i}^{\alpha \beta }\left(
x\right) \,u_{x_{i}}^{\beta }+\sum_{\beta =1}^{m}d^{\alpha \beta
}\left( x\right) \,u^{\beta }=f^{\alpha }\left( x\right)
\,,\;\;\;\;\;\;\forall \,\alpha =1,2,\ldots ,m,
\end{aligned}\end{equation}%
with bounded measurable coefficients $a_{ij}\,,b_{i}^{\alpha \beta
}\,,c_{i}^{\alpha \beta }\,,d^{\alpha \beta }$ and given functions $%
f_{i}^{\alpha }\,,f^{\alpha }$. Here the \textit{structure condition} is
stated in terms of the positive definite $n\times n$ matrix $\left(
a_{ij}\right) $, which \textit{does not depend} on $\alpha,\beta $. 
 In  \cite{Meier} Meier extended these results to 
 nonlinear elliptic systems of the form
\begin{equation}\label{e:introMeier}\sum_{i=1}^n\frac{\partial }{\partial x_i}\left(A_i^\alpha(x,u,Du)\right)=0,\end{equation}
under the following $p$-growth conditions, $1<p\le n$,
\begin{equation}\label{e:introMeierpbelow}
\sum_{i=1}^n\sum_{\alpha=1}^mA_i^\alpha(x,u,z)z_i^\alpha\ge |z|^{p}-d(x)|u|^{p}-g(x)\end{equation}
\begin{equation}\label{e:introMeierpabove}
|A^\alpha(x,u,z)|\le a|z|^{p-1}+b(x)|u|^{p-1}+e(x)\end{equation}
for $a>0$ and under suitable integrability assumptions on the nonnegative functions $b,e,d,g$. 
Meier introduces
 the so-called {\em indicator function} of the operator 
\begin{equation}\label{e:IA}
{I}_{A}(x, u, Du) := \sum_{\alpha,\beta,i}A_{i}^{\alpha}(x, u, Du)  D_i u^\beta \frac{u^{\alpha} u^{\beta}}{|u|^2}  
\end{equation}
and a pointwise assumption turns out to be crucial in Meier's techniques, indeed a 
weak solution $u$ of \eqref{e:introMeier} is locally bounded if 
\begin{equation}
\label{e:IAge0}
{I}_{A}(x, u, Du)\ge 0
\end{equation}
holds for large values of $|u|$. 
Notice that  \eqref{e:IAge0} is satisfied in 
 linear case \eqref{introduzione2}. 
Assumption \eqref{e:IAge0} is  satisfied also  by some nonlinear operators. For example:
 \begin{equation}
\label{struttura_radiale}
A^\alpha_i (Du) = \sigma(Du) D_i u^\alpha,
\end{equation}
when $0 \leq \sigma$, like in the case of Euler's system of the functional
\begin{equation}
\label{funzionale_ulenbek}
\int F(|Du|) dx,
\end{equation}
where $F$ increases and we take $\sigma(Du) = 
\frac{F^\prime (|Du|)}{|Du|}$. A third example, for which \eqref{e:IAge0} holds true, is given when considering Euler's system of the anisotropic integral
\begin{equation}
\label{funzionale_anisotropo}
\int \sum\limits_{i=1}^{n} g_i(|D_i u|) dx,
\end{equation}
where $g_i$ increases and we take 
\begin{equation}
\label{struttura_anisotropa_radiale}
A^\alpha_i (Du) = \frac{g_i^\prime(|D_i u|)}{|D_i u|} D_i u^\alpha,
\end{equation}
see section 4 in \cite{Leonetti_Mascolo}.
Let us look at another example: we set $m=n$ and we consider the polyconvex integral
\begin{equation}
\label{funzionale_policonvesso}
\int (|Du|^p + h(\det Du)) dx,
\end{equation}
where $h$ is convex, $C^1$, bounded from below. In this case Euler's system gives 
\begin{equation}
\label{struttura_policonvessa}
A^\alpha_i (Du) = p |D u|^{p-2} D_i u^\alpha + h^\prime(\det Du) (\operatorname{Cof} Du)^\alpha_i,
\end{equation}
where $(\operatorname{Cof} Du)^\alpha_i$ is the determinant of the $(n-1) \times (n-1)$ matrix obtained from the $n \times n$ matrix $Du$ by deleting row $\alpha$  and column $i$, with the sign given by $(-1)^{\alpha + i}$. It turns out that 
 \begin{equation}
\label{indicatrice_policonvessa}
I_A(x,u,Du)  = p |D u|^{p-2} \sum\limits_{i=1}^{n} 
\left( 
\sum\limits_{\alpha=1}^{n} \frac{u^\alpha}{|u|} D_i u^\alpha
\right)^2 + h^\prime(\det Du) \det Du
\geq  \inf\limits_{\mathbb{R}}h - h(0),
\end{equation}
then we get \eqref{e:IAge0}, provided $h(0) = \inf\limits_{\mathbb{R}} h$; see 
section \ref{s:example}, later in the present paper; see also \cite{leo-petr2011}. 

The previous examples show that Meier's condition allows us to deal with quite a large class of nonlinear systems.
Boundedness results for weak solutions to nonlinear elliptic systems are proved 
by Kr\"omer \cite{Kromer} under assumptions similar to \eqref{e:IAge0}, see also 
 Landes \cite{landes2}.

Actually Meier's  regularity result  is obtained under a weaker assumption, since 
 $I_A$ can be allowed to be negative, but not too much. 

More precisely, under \eqref{e:introMeier} and \eqref{e:introMeierpbelow}, there exist positive constants $\lambda$ and $L$ such that
\begin{equation}
\label{weak_Meier_condition}
{I}_{A}(x, u, z) := \sum_{\alpha,\beta,i}A_{i}^{\alpha}(x, u, z)  z_i^{\beta} \frac{u^{\alpha} u^{\beta}}{|u|^2}
\geq
- 
\left\{
\delta |z|^p + \left(\frac{1}{\delta}\right)^\lambda [d(x) |u|^p + g(x)]
\right\},
\end{equation}
for every $\delta \in (0,1)$, for all $(x,u,z) \in \Omega \times \mathbb{R}^m \times \mathbb{R}^{m \times n}$, with $|u| > L$.

 Let us observe that the following linear  decoupled system
does not verify \eqref{weak_Meier_condition}, 
see \cite{Leonetti_Petricca_AnnMat2014} and section \ref{s:example}, later in the present paper:
\begin{equation}
\label{struttura_lineare_esclusa}
A^\alpha_i (x, Du) = \sigma^\alpha (x) D_i u^\alpha,
\end{equation}
where $m=2$, 
\begin{equation}
\label{sigma(x)=}
\sigma^1 (x) = 18 + 2\sin(|x|^2)\quad \text{ and } \quad\sigma^2 (x) = 2 + \sin(|x|^2).
\end{equation}

 Now we 
consider  
 another example, see \cite{leo-petr2011}, in which the equations are coupled and Meier's condition \eqref{weak_Meier_condition} is not satisfied: it is Euler's system of 
\begin{equation}
\label{funzionale_prodotto}
\int 
\left[
|Du|^2 + h(D_1 u^1 D_1 u^2)
\right] dx
\end{equation}
where $m=2$, $h$ is convex, $C^1$, bounded from below, so that
\begin{equation}
\label{struttura_prodotto}
A^\alpha_i (Du) =  2 D_i u^\alpha +  h^\prime (D_1 u^1 D_1 u^2) D_1 u^{\hat{\alpha}} \delta_{i 1},
\end{equation}
where 
$$\hat{\alpha} = 2 \,\,\text{ if } \alpha=1\,\, \text{ and }\,\, \hat{\alpha} = 1 \,\,\text{ if } \alpha=2;\,\, \text{ moreover, }\,\, \delta_{i 1}=1 \,\,\text{ if }\,\, i=1\\\,\, \text{ and } \delta_{i 1}=0 \,\,\text{ otherwise.}$$ 
Meier's condition \eqref{weak_Meier_condition} is not satisfied, provided  $h^\prime(0) \leq -8$: for instance, $h(t) = 16\sqrt{1+(t-1)^2}$; see section \ref{s:example} later in the present paper.

Combining coefficients $\sigma^\alpha(x)$ similar to \eqref{sigma(x)=} with the nonlinear part of \eqref{struttura_prodotto}, we are able to build an example with $p$ growth that does not satisfy Meier's condition \eqref{weak_Meier_condition}. Indeed, 
\begin{equation}
\label{struttura_prodotto_p}
A^\alpha_i (Du) = \sigma^\alpha (x) p |Du|^{p-2} D_i u^\alpha +  h^\prime (D_1 u^1 D_1 u^2) D_1 u^{\hat{\alpha}} \delta_{i 1},
\end{equation}
where $2 \leq p$, $h$ is convex, $C^1$, bounded from below; $\hat{\alpha}$ and $\delta_{i 1}$ are defined as before. 
Moreover, $m=2$ and
\begin{equation}
\label{sigma(x)=bis}
\sigma^1 (x) = 48 + 3\sin(|x|^2) \text{ and } \sigma^2 (x) = 2 + \sin(|x|^2).
\end{equation}
Meier's condition \eqref{weak_Meier_condition} is not satisfied, provided  $h^\prime(0) \leq 0$: for instance, $h(t) = \left(1+t^2\right)^{p/4}$; see section \ref{s:example} for the details.

In  \cite{Bjorn} Bjorn  obtained boundedness of solutions $u$ of systems without considering  the indicator function but assuming componentwise coercivity:
\begin{equation}
\label{coercivita_su_ogni_componente}
\nu |z^{\alpha}|^p - a(x) - b(x) |u|^p \leq  \sum\limits_{i=1}^n
A_i^{\alpha}(x,u,z)z_i^{\alpha},\,\, \text{ with }\,\,\nu>0.
\end{equation}

Previous assumption \eqref{coercivita_su_ogni_componente} says that, even if row $\alpha$ of the system contains all the components of $z=Du$, after multiplying this row by component $\alpha$ of $z=Du$, from below we only see the $\alpha$ component of $z=Du$ and none of other components. 

 \eqref{coercivita_su_ogni_componente} is satisfied in system \eqref{struttura_lineare_esclusa}, provided $\sigma^\alpha (x) \geq \nu$ for some positive constant $\nu$.  Furthermore, the structure in \eqref{struttura_radiale} guarantees \eqref{coercivita_su_ogni_componente}, provided $\sigma(Du) \geq \nu |Du|^{p-2}$, for some constants $p \geq 2$ and $\nu >0$. 
Let us mention that 
polyconvex structure \eqref{struttura_policonvessa} enjoys \eqref{coercivita_su_ogni_componente}, provided $p \geq 2$, see section \ref{s:example}.
 Finally, systems in \eqref{struttura_prodotto} and \eqref{struttura_prodotto_p} satisfy \eqref{coercivita_su_ogni_componente}: details are in section \ref{s:example}. 

Let us 
observe that the interesting
 Bjorn's technique allows to deal only with the subquadratic case $1 < p \leq 2$.
When $A^\alpha_i$ does not depend on $u$, in Theorem \ref{t:boundedness}, we are able to deal with the case $p_0 < p < n$, for a suitable $p_0=p_0(n)$; in the three dimensional case $n=3$, $p_0=3/2$, so our result complements the one of Bjorn and we get boundedness of solutions of elliptic systems under componentwise coercivity, see details at the end of this introduction.
 
It is worth pointing out that we study 
system satisfing $p,q$-growth, according to Marcellini \cite{Marcellini1991}. Regularity in this case is obtained when $q$ is not far from $p$, see the survey \cite{Mingione} and, more recently, \cite{MarcelliniDiscrContDinSystems2019}, \cite{Mingione-Radulescu}, \cite{Marcellini2021}; inequality  $p \leq q < p^*\frac{n}{p(n+1)}$ tells us that $q$ cannot be too far from $p$.

 We underline that 
the strategy for proving our vectorial regularity result is De Giorgi's elegant and powerful method, see \cite{degiorgi1}.  Precisely, 
 we prove separately that each component $u^\alpha$ satisfies a suitable Caccioppoli-type inequality, a decay of  the ``excess'' on super-(sub-) level sets of 
$u^\alpha$ that allow to apply iteration arguments and, eventually,  the  local boundedness of the $\alpha$-th component of $u$.  
A similar strategy has been successfully applied in  \cite{CLM-Poli}  to prove the  boundedness of local minimizers of polyconvex functionals satisfying a non-standard growth, 
see also \cite{CGGL}, \cite{CFLM} and \cite{Shan_Gao}.
Local boundedness of weak solutions to some elliptic systems with anisotropic or $p,q$ growth has been proved in \cite{CMM} by using Moser's iteration technique.  
In \cite{leo-petr2011} and \cite{Soft}, a kind of maximum principle has been proved for systems verifying a condition similar to \eqref{coercivita_su_ogni_componente}; see also \cite{PalaSoft}.

We try to explain why we are able to consider values of $p$ larger than the ones considered in \cite{Bjorn}. Bjorn uses Caccioppoli inequality on superlevel sets $\{ v > k \}$ with the {\em same} exponent $p$ both for $Dv$ and $v -k$. We use Caccioppoli inequality on superlevel sets with {\em different} exponents: $p$  for $Dv$ and $p^*$ for $v -k$. When $p$ is close to $n$, then $p^*$ is, by far, larger than $p$, and this helps a lot. Let us also mention that Bjorn  takes $v = \max \{|u^1|,...,|u^m|\}$, where $u=(u^1,...,u^m)$ is the solution of the system; on the contrary, we take $v=u^\alpha$, the component $\alpha$ of $u$.

\medbreak
Let us discuss inequalities $1 < p < n$, $p \leq q < p^*\frac{n}{p(n+1)}$, 
as required in \eqref{equiv} of our Theorem \ref{t:boundedness}.  
We have to solve $p < p^*\frac{n}{p(n+1)}$ when $1<p<n$. This means that $0 < (n+1) p^2 - n(n+1) p + n^2$; when $n=2$ this is satisfied for every $p$; when $n=3$, it is true for $p \neq \frac{3}{2}$; when $n \geq 4$ the inequality is satisfied for $1<p<p_{-}$ or $p_{+} < p <n$, where
\begin{equation}
\label{p-+}
p_{\pm} = \frac{n}{2} 
\left(
1 \pm \sqrt{\frac{n-3}{n+1}}
\right).
\end{equation}
Note that 
\begin{equation}
\label{p-<2}
1 < p_{-} < 2 < p_{+} < n.
\end{equation} 

If we confine ourselves to the case $p=q$, it is possible to make a comparison with Bjorn \cite{Bjorn}. When $n=2$, we recover Bjorn's boundedness result for every $1 < p < n=2$. When $n=3$, Bjorn's result is limited to $1<p \leq 2$ and we complement it, since we are able to deal with $2 < p < n=3$. When $n \geq 4$, Bjorn's result holds true for $1 < p \leq 2$, our result is valid when $p_{+} < p < n$, so it remains open the case $2 < p \leq p_{+}$.



\medbreak
Our paper is organized as follows. In the next section we present the 
proof of Theorem  \ref{t:boundedness}. In
section \ref{s:example} we give details for some of the previous examples.

\section{Proof of Theorem  \ref{t:boundedness}}
\label{s:dimostrazione}

The proof of    Theorem \ref{t:boundedness} is based on the DeGiorgi method, see \cite{degiorgi1}, suitable for dealing with equations.
 Nevertheless  we apply it in the vectorial 
framework, since we can 
apply it to each component $u^{\alpha}$ of a weak solution $u$ separately.

\medskip
\noindent
\subsection*{STEP 1. Caccioppoli inequality}

 The particular growth conditions \eqref{(H1)} and \eqref{(H2)} guarantee a Caccioppoli inequality for any component $u^{\alpha}$ of $u$  on every superlevel set $\{ u^{\alpha} > k \}$.
 

\begin{proposition}\label{p:Caccioppoli} 
Let us consider the system \eqref{dirichlet} and assume that \eqref{(H1)}, \eqref{(H2)} hold.
Let  $u\in W_{\rm loc}^{1,q}(\Omega;\mathbb{R}^m)$ be a weak solution of \eqref{dirichlet}. 
Let $B_{R}(x_0)\Subset\Omega$ with $|B_{R}(x_0)| \leq 1$;
for
 $k\in \mathbb{R}$, $\alpha = 1,...,m$ and $0<\tau \le R$, denote
$$
A^{\alpha}_{k,\tau}:=\{x \in B_{\tau}(x_0)\,:\, u^\alpha(x)>k \}.
$$
If $q\le p^*$ then, there exists $c=c(n,p,\nu,M)>0$
such that, for every $s,t$ with $0<s<t\le R$, for every $k \in \mathbb{R}$ and for every 
$\alpha = 1,...,m$ we have

 \begin{align}\nonumber \int_{A^{\alpha}_{k,{s}}} |Du^\alpha|^p\,dx\le & c\int_{A^\alpha_{k,{t}}}
 \left(\frac{u^{\alpha}-k}{t-s}\right)^{p^*}\,dx
 \\ &
+c
\left\{
\|D u\|_{L^{q}(B_R(x_0))}^{(q-1)(p^*)^{\prime}} +
\|a\|_{L^{\tau_1}(B_R(x_0))}  +
\|b\|_{L^{\tau_2}(B_R(x_0))}^{(p^*)'}
\right\}
|A^\alpha_{k,t}|^{\vartheta},
 \label{Caccioppoli}\end{align}
where \[\vartheta:=\min\left\{1 - \frac{(p^*)^{\prime}}{q'},1- \frac{1}{\tau_1}, 1- \frac{(p^*)'}{\tau_2}\right\}.\] 
We can take $c=\frac{1+M2^{1+p^*}}{\nu}$.
\end{proposition}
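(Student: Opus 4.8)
The plan is to test the weak formulation \eqref{weak_solution} with a test function supported where $u^\alpha$ exceeds the level $k$, isolating the $\alpha$-th component. Fix a cut-off $\eta \in C_c^\infty(B_t(x_0))$ with $\eta \equiv 1$ on $B_s(x_0)$, $0 \le \eta \le 1$ and $|D\eta| \le \frac{2}{t-s}$, and set $w := (u^\alpha - k)_+$. The natural choice is $\psi^\beta := \delta_{\alpha\beta}\, \eta^{p^*} w$, which lies in $W_0^{1,q}(B_t(x_0);\mathbb{R}^m)$ since $u \in W^{1,q}_{\rm loc}$ and $q \le p^*$. Plugging this into \eqref{weak_solution}, only the $\beta=\alpha$ term survives and $D_i\psi^\alpha = \eta^{p^*} D_i w + p^* \eta^{p^*-1} w\, D_i\eta$, so we obtain
\[
\int_{B_t} \eta^{p^*} \sum_{i=1}^n A_i^\alpha(x,Du)\, D_i w \, dx
= - p^* \int_{B_t} \eta^{p^*-1} w \sum_{i=1}^n A_i^\alpha(x,Du)\, D_i\eta \, dx .
\]
On the set $\{u^\alpha > k\}$ we have $D_i w = D_i u^\alpha$ and $\sum_i A_i^\alpha D_i w = \sum_i A_i^\alpha D_i u^\alpha \ge \nu |Du^\alpha|^p - a(x)$ by \eqref{(H1)}; on $\{u^\alpha \le k\}$, $w=0$ and $Dw=0$, so the left-hand integral is genuinely over $A^\alpha_{k,t}$. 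This is the crucial point where componentwise coercivity is used: the growth-from-below \eqref{(H1)} controls precisely the one component that the test function activates.

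Next I would estimate the two resulting error terms. From the lower bound, the left side is $\ge \nu \int_{A^\alpha_{k,t}} \eta^{p^*}|Du^\alpha|^p\,dx - \int_{A^\alpha_{k,t}} \eta^{p^*} a(x)\,dx$. For the right side, bound $\sum_i |A_i^\alpha(x,Du)| \le M(|Du|^{q-1}+b(x))$ by \eqref{(H2)} and $|D_i\eta|\le \frac{2}{t-s}$, giving
\[
p^* \int_{A^\alpha_{k,t}} \eta^{p^*-1}\, w\, \frac{2M}{t-s}\bigl(|Du|^{q-1}+b(x)\bigr)\,dx .
\]
Now apply Young's inequality to the pair $\eta^{p^*-1}\frac{w}{t-s}$ and $|Du|^{q-1}$ using exponents $p^*$ and $(p^*)'$: the first factor produces the term $\bigl(\frac{w}{t-s}\bigr)^{p^*}\eta^{(p^*-1)p^*/\ldots}$ — here one checks $(p^*-1)(p^*)' \ge p^* \ge 0$ so the power of $\eta$ stays $\ge p^*$ and $\le 1$, hence is harmless — while the second produces $|Du|^{(q-1)(p^*)'}$ integrated over $A^\alpha_{k,t}$; similarly for the pair involving $b(x)$. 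The $|Du|^{(q-1)(p^*)'}$ integral and the $a$, $b$ integrals over $A^\alpha_{k,t}$ are then handled by Hölder's inequality against the indicator $\mathbf 1_{A^\alpha_{k,t}}$: with exponents $\frac{q}{q-1}\cdot\frac{1}{(p^*)'}$ resp. $\tau_1$ resp. $\frac{\tau_2}{(p^*)'}$ one extracts $\|Du\|_{L^q(B_R)}^{(q-1)(p^*)'}$, $\|a\|_{L^{\tau_1}(B_R)}$, $\|b\|_{L^{\tau_2}(B_R)}^{(p^*)'}$ times the corresponding power of $|A^\alpha_{k,t}|$; the minimum of those three exponents' complementary fractions is exactly $\vartheta$, and bounding each of them below by $|A^\alpha_{k,t}|^{\vartheta}$ (legitimate since $|A^\alpha_{k,t}|\le|B_R|\le 1$) consolidates them. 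Finally absorb nothing on the left (no absorption is needed since $Dw=Du^\alpha$ already, not $D(\eta w)$), divide by $\nu$, use $\eta\equiv 1$ on $B_s$ to replace the left integral by $\int_{A^\alpha_{k,s}}|Du^\alpha|^p$, and collect constants into $c=\frac{1+M2^{1+p^*}}{\nu}$.

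The main obstacle I anticipate is purely bookkeeping rather than conceptual: one must be careful that every application of Young's inequality leaves a power of $\eta$ that is at least $p^*$ on the "gradient-of-$w$" side (so it can be dropped after using $\eta\le 1$, yielding the clean $\bigl(\frac{u^\alpha-k}{t-s}\bigr)^{p^*}$ term) and that the three Hölder exponents are all admissible, i.e. that the constraints $q \le p^*$, $\tau_1>1$, $\tau_2 \ge \frac{q}{q-1}$ guarantee $(p^*)'/q' \le 1$, $1/\tau_1<1$, $(p^*)'/\tau_2 \le 1$ so that $\vartheta \ge 0$. Note that, in contrast to the scalar De Giorgi argument, no term of the form $\int |u^\alpha-k|^p$ or absorption of $\int \eta^p |Du^\alpha|^p$ appears, because the test function carries the undifferentiated factor $w$ only against $D\eta$; this is the structural reason the right-hand side features the Sobolev exponent $p^*$ instead of $p$, which is precisely the feature exploited later to reach values of $p$ close to $n$.
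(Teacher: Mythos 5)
Your proposal is correct and follows essentially the same route as the paper: test with $(u^\alpha-k)_+$ times a cutoff supported in $B_t$, use the componentwise coercivity \eqref{(H1)} on the left and the growth \eqref{(H2)} on the $D\eta$ term, then Young with exponents $p^*,(p^*)'$ followed by H\"older against $|A^\alpha_{k,t}|$ with exponents $q'/(p^*)'$, $\tau_1$, $\tau_2/(p^*)'$. The only (immaterial) difference is your choice $\eta^{p^*}$ in place of the paper's plain $\eta$ --- since, as you observe, no absorption is needed, the first power suffices, and your extra factor $p^*$ from $D(\eta^{p^*})$ would slightly change the explicit constant $c=\frac{1+M2^{1+p^*}}{\nu}$, though not the dependence $c=c(n,p,\nu,M)$.
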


\begin{proof}Fix $\alpha\in \{1,\ldots,m\}$. 
 Consider  a
cut-off function
$\eta\in C_0^{1}(B_t(x_0))$  satisfying the following assumptions:
\begin{equation}  \label{eta}
0\le \eta\le 1,\quad \eta \equiv 1\ \text{in $B_{s}(x_0)$,} \quad \text{$|D \eta|\le \frac{2}{t-s}$.}
\end{equation} 
Define the test function  $\psi = (\psi^1,...,\psi^m) \in  W^{1,q}_{0}(B_t(x_0);\mathbb{R}^m)$, where 
$\psi^{\beta} = 0$ if $\beta \neq \alpha$ and   $\psi^{\alpha} = (u^{\alpha}-k)_+ \, \eta$, where $\tau_+=\max\{\tau,0\}$. 
Notice that 
\[\psi^{\alpha}_{x_i}=\chi_{\{u^{\alpha}>k\}}u_{x_i}^{\alpha}\eta
+ 
\eta_{x_i} (u^{\alpha}-k)_+,\]
where $\chi_{E}(x) = 1$ if $x \in E$ and $\chi_{E}(x) = 0$ otherwise; moreover, 
$f_{x_i}=D_if=\frac{\partial f}{\partial x_i}$.

We insert such a $\psi$ into \eqref{weak_solution} and we get
\begin{equation} 
\sum_{i=1}^n\int_{\{u^{\alpha}>k\}}\ A_{i}^{\alpha}(x,Du)u^{\alpha}_{x_i}\eta
\,dx
  = -
	\sum_{i=1}^n\int_{\{u^{\alpha}>k\}} 
  A_{i}^{\alpha}(x,Du)(u^{\alpha}-k)
	\eta_{x_i}\,dx.
\label{e:(5)}
\end{equation}
By \eqref{(H1)} and \eqref{(H2)} 
 \begin{align}\nonumber &\nu \int_{\{u^{\alpha}>k\}}|Du^{\alpha}|^{p} \eta
\,dx\le \int_{\{u^{\alpha}>k, \eta>0\}}a(x)\eta
\,dx
 \\ \nonumber+ & 
M\int_{\{u^{\alpha}>k\}} (u^{\alpha}-k)|Du|^{q-1} 
|D\eta|\,dx
\\ \nonumber+ & 
M
 \int_{\{u^{\alpha}>k\}} (u^{\alpha}-k) b(x)  
|D\eta|\,dx\\ =:& J_1+J_2+J_3.\label{6}\end{align}
It is easy to  estimate $J_1$, indeed, using H\"older inequality 
\begin{equation}
J_1\le \|a\|_{L^{\tau_1}(B_R(x_0))}|A^\alpha_{k,t}|^{1- \frac{1}{\tau_1}}. 
\label{J1}
\end{equation}
In order to estimate $J_2$, we first use Young inequality with exponents $p^*$ and $(p^*)^{\prime}$. 
\[
J_2\leq
M\int_{A^\alpha_{k,t}} (u^{\alpha}-k)^{p^*}
|D\eta|^{p^*} \,dx
+
M\int_{A^\alpha_{k,t}} |Du|^{(q-1)(p^*)^{\prime}} 
\,dx.
\]
Since $q< p^*$ then $(q-1)(p^*)^{\prime} < q$. Therefore we can use H\"older inequality with first 
exponent $\frac{q'}{(p^*)^{\prime}} > 1$ to estimate the last integral,  obtaining 
\begin{equation}\label{e:stimaDu}
M\int_{A^\alpha_{k,t}} |Du|^{(q-1)(p^*)^{\prime}} 
\,dx
\leq
M
\left(
\int_{A^\alpha_{k,t}} |Du|^{q} 
\, dx
\right)^{\frac{(p^*)^{\prime}}{q'}}
\left|
A^\alpha_{k,t}
\right|^{1 - \frac{(p^*)^{\prime}}{q'}}.
\end{equation}
Thus, if 
 we keep in mind that $|D \eta| \leq 2/(t-s)$, then
\begin{equation} 
 J_2
\leq
M
2^{p^*}
\int_{A^\alpha_{k,t}} 
\left(
\frac{u^{\alpha}-k}{t-s}
\right)^{p^*}
 \,dx
+
M
\left(
\int_{A^\alpha_{k,t}} |Du|^{q} 
\, dx
\right)^{\frac{(p^*)^{\prime}}{q'}}
\left|
A^\alpha_{k,t}
\right|^{1 - \frac{(p^*)^{\prime}}{q'}}.
\label{21}
\end{equation}

 In order to estimate $J_3$, we first use Young inequality with exponents $p^*$ and $(p^*)^{\prime}$:
\[
M\int_{\{u^{\alpha}>k\}} (u^{\alpha}-k) b(x) 
|D\eta|\,dx
\leq
M\int_{A^\alpha_{k,t}} (u^{\alpha}-k)^{p^*}
|D\eta|^{p^*} \,dx
+
M\int_{A^\alpha_{k,t}} b^{(p^*)^{\prime}} 
\,dx;
\]
note that  
 $\tau_2 \geq q^{\prime} > (p^*)^{\prime}$; so, we can use H\"older inequality with first 
exponent $\frac{\tau_2}{(p^*)^{\prime}} > 1$ and we get
\[
M\int_{A^\alpha_{k,t}} b(x)^{(p^*)^{\prime}} 
\,dx
\leq
M
\left(
\int_{A^\alpha_{k,t}} b(x)^{\tau_2} 
\, dx
\right)^{\frac{(p^*)^{\prime}}{\tau_2}}
\left|
A^\alpha_{k,t}
\right|^{1 - \frac{(p^*)^{\prime}}{\tau_2}}.
\]
Once again we use that $|D \eta| \leq 2/(t-s)$, then
\begin{equation} 
 J_3
\leq
M
2^{p^*}
\int_{A^\alpha_{k,t}} 
\left(
\frac{u^{\alpha}-k}{t-s}
\right)^{p^*}
 \,dx
+
M
\left(
\int_{B_R (x_0)} b(x)^{\tau_2} 
\, dx
\right)^{\frac{(p^*)^{\prime}}{\tau_2}}
\left|
A^\alpha_{k,t}
\right|^{1 - \frac{(p^*)^{\prime}}{\tau_2}}.
\label{22}
\end{equation}
Collecting \eqref{6}, \eqref{J1}, \eqref{21}, \eqref{22}, we get 
\begin{eqnarray} 
\nu \int_{A^\alpha_{k,t}}|Du^{\alpha}|^{p} \eta
\,dx
\leq
M
2^{1+p^*}
\int_{A^\alpha_{k,t}} 
\left(
\frac{u^{\alpha}-k}{t-s}
\right)^{p^*}
 \,dx
+
\|a\|_{L^{\tau_1}(B_R(x_0))}|A^\alpha_{k,t}|^{1- \frac{1}{\tau_1}}
\nonumber
\\
+M
\left(
\int_{A^\alpha_{k,t}} |Du|^{q} 
\, dx
\right)^{\frac{(p^*)^{\prime}}{q'}}
\left|
A^\alpha_{k,t}
\right|^{1 - \frac{(p^*)^{\prime}}{q'}}
+
M
\left(
\int_{B_R (x_0)} b(x)^{\tau_2} 
\, dx
\right)^{\frac{(p^*)^{\prime}}{\tau_2}}
\left|
A^\alpha_{k,t}
\right|^{1 - \frac{(p^*)^{\prime}}{\tau_2}}
\nonumber
\\
\le  M
2^{1+p^*}
\int_{A^\alpha_{k,t}} 
\left(
\frac{u^{\alpha}-k}{t-s}
\right)^{p^*}
 \,dx
+
\|a\|_{L^{\tau_1}(B_R(x_0))}|A^\alpha_{k,t}|^{1- \frac{1}{\tau_1}}
\nonumber
\\
+
M
\|Du\|_{L^{q}(B_R(x_0))}
^{(q-1)(p^*)^{\prime}}
\left|
A^\alpha_{k,t}
\right|^{1 - \frac{(p^*)^{\prime}}{q'}}+
M
\|b\|_{L^{\tau_2}(B_R(x_0))}^{(p^*)^{\prime}}
\left|
A^\alpha_{k,t}
\right|^{1 - \frac{(p^*)^{\prime}}{\tau_2}}.
\label{23}
\end{eqnarray}
We keep in mind that $\eta = 1$ on $B_{s}(x_0)$ and $|A^\alpha_{k,t}| \leq |B_{R}(x_0)| \leq 1$: 
inequality \eqref{Caccioppoli} follows by taking $c=\frac{1+M2^{1+p^*}}{\nu}$.

\end{proof}

\noindent
\subsection*{STEP 2: Decay of the ``excess'' on superlevel sets.}
\noindent 

 In this step we consider a    
{\em scalar} Sobolev function $v:\Omega\subset \mathbb{R}^n\to \mathbb{R}$, $n\ge 2$. 
%
%
 \medbreak
 
Let us assume  that $\Omega$ is an open set in $\mathbb{R}^n$ and 
$v$ is a {\em scalar}  function $v \in W_{\rm loc}^{1,p}(\Omega;\mathbb{R})$, $p\ge 1$. 
Fix $B_{R_0}(x_0) \Subset \Omega$, with  $R_0<1$ small enough so that 
\begin{equation}|B_{R_0}(x_0)|< 1 \quad \text{and}\quad   
\int_{B_{R_0}}|v|^{p^*}\,dx<  1.\label{e:leonetti}
\end{equation} Here  $p^*=\frac{np}{n-p}$, since  $p<n$.

For every $R\in (0,R_0]$ we define 
the decreasing sequences 
$$
\rho_{h}:=\frac{R}{2}+\frac{R}{2^{h+1}}=\frac{R}{2}\left(1+\frac{1}{2^h}\right), \qquad 
\bar{\rho}_h:=\frac{\rho_{h}+\rho_{h+1}}{2}=\frac{R}{2}\left(1+\frac{3}{4\cdot 2^h}\right).
$$
Fixed   a  positive constant $d \geq 1$,  define the  increasing sequence of positive real numbers 
$$
k_h:= d\left( 1-\frac{1}{2^{h+1}}\right), \,\,h \in \mathbb{N}.
$$
Moreover,    define the sequence $(J_{v,h})$, 
$$
J_{v,h}:= \int _{A_{k_{h},\rho_{h}}}(v-k_h)^{p^*} \, dx,
$$
where $A_{k,\rho} = \{ v>k \} \cap B_\rho$.
The following result holds (see \cite[Proposition 2.4]{CLM-Poli}, \cite{Fusco-Sbordone}, \cite{Moscariello-Nania}).

\begin{proposition}\label{iterazione} 
Let  $v \in W_{\rm loc}^{1,p}(\Omega;\mathbb{R})$, $p\ge 1$. 
Fix $B_{R_0}(x_0) \Subset \Omega$, with  $R_0<1$ small enough such that  \eqref{e:leonetti} holds.
If there exists 
$0\le \vartheta\le 1$ and $c_0>0$ such that for every  $0<s<t\le R_0$ and for every  $k\in \mathbb{R}$ 
\begin{equation}\label{e:Caccioppoli}\int_{A_{k,s}}|Dv|^p\,dx\le c_0
\left\{
\int_{A_{k,t}}\left(\frac{v-k}{t-s}\right)^{p^*}\,dx+|A_{k,t}|^{\vartheta}
\right\},
\end{equation}
then,  for every  $R\in (0,R_0]$,
\[J_{v,h+1}  \le 
 c(\vartheta,R)\left(2^{\frac{p^*p^*}{p}}\right)^h J^{\vartheta\frac{p^*}{p}}_{v,h},
\]
with the positive constant $c$ independent of $h$.
\end{proposition}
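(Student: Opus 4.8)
The plan is to run one step of the De Giorgi iteration: I would combine the Sobolev inequality on $\mathbb{R}^n$ with the assumed Caccioppoli-type estimate \eqref{e:Caccioppoli}, exploiting the geometric separation of both the radii $\rho_h$ and the levels $k_h$. Fix $R\in(0,R_0]$ and $h\in\mathbb{N}$, and choose a cut-off $\zeta\in C_0^1(B_{\bar\rho_h}(x_0))$ with $0\le\zeta\le1$, $\zeta\equiv1$ on $B_{\rho_{h+1}}(x_0)$ and $|D\zeta|\le \frac{C}{\bar\rho_h-\rho_{h+1}}=\frac{C\,2^{h+3}}{R}$. Since $v\in W^{1,p}_{\rm loc}(\Omega)$, the function $w:=(v-k_{h+1})_+\,\zeta$ lies in $W^{1,p}_0(B_{\bar\rho_h}(x_0))$, with $Dw=\zeta\,\chi_{\{v>k_{h+1}\}}Dv+(v-k_{h+1})_+D\zeta$; extending $w$ by zero to $\mathbb{R}^n$ and applying the Sobolev inequality, together with $\zeta\equiv1$ on $B_{\rho_{h+1}}(x_0)$, I would get
\[ J_{v,h+1}^{p/p^*}\le\Big(\int_{\mathbb{R}^n}w^{p^*}\,dx\Big)^{p/p^*}\le C(n,p)\int_{\mathbb{R}^n}|Dw|^p\,dx\le C\int_{A_{k_{h+1},\bar\rho_h}}|Dv|^p\,dx+C\,\frac{2^{(h+3)p}}{R^p}\int_{A_{k_{h+1},\bar\rho_h}}(v-k_{h+1})^p\,dx . \]

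Next I would reduce the right-hand side to a single power of $J_{v,h}$. For the gradient term I would apply \eqref{e:Caccioppoli} with $s=\bar\rho_h$, $t=\rho_h$ (so that $t-s=\frac{R}{2^{h+3}}$) and $k=k_{h+1}$, obtaining $\int_{A_{k_{h+1},\bar\rho_h}}|Dv|^p\,dx\le c_0\{\frac{2^{(h+3)p^*}}{R^{p^*}}\int_{A_{k_{h+1},\rho_h}}(v-k_{h+1})^{p^*}\,dx+|A_{k_{h+1},\rho_h}|^{\vartheta}\}$. Two elementary facts then close the matter: (i) since $k_{h+1}>k_h$ and $\bar\rho_h<\rho_h$ one has $A_{k_{h+1},\bar\rho_h}\subseteq A_{k_{h+1},\rho_h}\subseteq A_{k_h,\rho_h}$ and $(v-k_{h+1})^{p^*}\le(v-k_h)^{p^*}$ on $\{v>k_{h+1}\}$, hence $\int_{A_{k_{h+1},\rho_h}}(v-k_{h+1})^{p^*}\,dx\le J_{v,h}$; (ii) on $A_{k_{h+1},\rho_h}$ we have $v-k_h>k_{h+1}-k_h=d\,2^{-(h+2)}$, whence $|A_{k_{h+1},\rho_h}|\le(2^{h+2}/d)^{p^*}J_{v,h}$. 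For the lower-order term, Hölder's inequality with exponents $p^*/p$ and $(p^*/p)'$ together with (i)–(ii) gives $\int_{A_{k_{h+1},\bar\rho_h}}(v-k_{h+1})^p\,dx\le(2^{h+2}/d)^{p^*-p}J_{v,h}$.

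Collecting these bounds, every dyadic factor is of the form $2^{\beta h}$ with $\beta\le p^*$, and the remaining constants (recall $d\ge1$) can be absorbed into a constant $c=c(n,p,c_0,\vartheta,R)$, so that $J_{v,h+1}^{p/p^*}\le c\,2^{p^*h}\big(J_{v,h}+J_{v,h}^{\vartheta}\big)$. Here I would invoke the smallness condition \eqref{e:leonetti}: since $k_h\ge0$ one has $J_{v,h}=\int_{A_{k_h,\rho_h}}(v-k_h)^{p^*}\,dx\le\int_{B_{R_0}}|v|^{p^*}\,dx<1$, and therefore $J_{v,h}\le J_{v,h}^{\vartheta}$ because $0\le\vartheta\le1$. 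Thus $J_{v,h+1}^{p/p^*}\le 2c\,2^{p^*h}J_{v,h}^{\vartheta}$, and raising to the power $p^*/p$ yields exactly
\[ J_{v,h+1}\le c(\vartheta,R)\,\Big(2^{\frac{p^* p^*}{p}}\Big)^{h}\,J_{v,h}^{\vartheta\frac{p^*}{p}}, \]
which is the claim. The only delicate point is the bookkeeping: keeping track of the powers of $2$ generated by $|D\zeta|$, by $t-s$ in \eqref{e:Caccioppoli} and by the level gap $k_{h+1}-k_h$, and using the inclusions in (i)–(ii) and the bound $J_{v,h}<1$ to collapse every term to a single power of $J_{v,h}$. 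Each step is routine, but it must be carried out consistently so that the final base of the exponential is precisely $2^{p^*p^*/p}$.
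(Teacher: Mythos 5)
Your proof is correct and follows exactly the standard one-step De Giorgi iteration that the paper itself delegates to the cited references (\cite{CLM-Poli}, \cite{Fusco-Sbordone}, \cite{Moscariello-Nania}): Sobolev inequality for the truncated, cut-off function, the Caccioppoli estimate with $s=\bar\rho_h$, $t=\rho_h$, the level-gap bound $|A_{k_{h+1},\rho_h}|\le(2^{h+2}/d)^{p^*}J_{v,h}$, and the reduction $J_{v,h}\le J_{v,h}^{\vartheta}$ via \eqref{e:leonetti}. The bookkeeping of the dyadic factors is consistent and yields precisely the stated base $2^{p^*p^*/p}$, so nothing needs to be added.
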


\medbreak

\noindent
\subsection*{STEP 3: Iteration and  proof of Theorem \ref{t:boundedness}}

\noindent

We now resume the proof of Theorem \ref{t:boundedness}.  

We need the following  classical result, see e.g. \cite{giusti}.
\begin{lemma}\label{lemma2}
Let $\gamma >0$ and let $(J_h)$ be  a sequence of real positive numbers, such that
\begin{equation}
\label{ipetesi_giusti}
J_{h+1} \leq A\,\lambda^h J_h^{1+\gamma}\qquad \forall h\in \mathbb{N}\cup\{0\},
\end{equation}
\noindent
with $A>0$ and $\lambda>1$. 
If  $J_{0} \leq A^{-\frac{1}{\gamma}}\lambda^{-\frac{1}{\gamma^2}}$, 
then\  $
J_h \le \lambda^{-\frac{h}{\gamma}}J_{0}  
$\  and \  $\lim_{h\to \infty} J_h=0$.
\end{lemma}

Fix 
$B_{R_0}(x_0) \Subset \Omega$, with  $R_0 < 1$ small enough such that  $|B_{R_0}(x_0)|< 1$ and  $\int_{B_{R_0}}|u|^{p^*}\,dx   <  1$.
By Proposition \ref{p:Caccioppoli} we have that $u^\alpha$ satisfies \eqref{Caccioppoli}; i.e.  for every $0<s<t\le R_0$  and every $k\in \mathbb{R}$,
 \begin{align*}\nonumber \int_{A^{\alpha}_{k,{s}}} |Du^\alpha|^p\,dx\le & c\int_{A^\alpha_{k,{t}}}
 \left(\frac{u^{\alpha}-k}{t-s}\right)^{p^*}\,dx
 \\ &
+c
\left\{
\|D u\|_{L^{q}(B_{R_0}(x_0))}^{(q-1)(p^*)^{\prime}} +
\|a\|_{L^{\tau_1}(B_{R_0}(x_0))}  +
\|b\|_{L^{\tau_2}(B_{R_0}(x_0))}^{(p^*)'}
\right\}
|A^\alpha_{k,t}|^{\vartheta},
 \end{align*}
where \[\vartheta:=\min\left\{1-\frac{(p^*)^\prime}{q'},1- \frac{1}{\tau_1}, 1- \frac{(p^*)'}{\tau_2}\right\}\] 
and  $c=\frac{1+M2^{1+p^*}}{\nu}$.

\noindent Therefore the scalar function $u^\alpha$ satisfies \eqref{e:Caccioppoli} of Proposition \ref{iterazione} with 
constant $c_0$ depending on 
\[\|D u\|_{L^{q}(B_{R_0}(x_0))}^{(q-1)(p^*)^{\prime}}, \quad \|a\|_{L^{\tau_1}(B_{R_0}(x_0))}  \quad \text{and}\quad \|b\|_{L^{\tau_2}(B_{R_0}(x_0))}^{(p^*)'}.\]
 Note that these integrals are finite.

 As above, let us   define   
$$
k_h:= d\left( 1-\frac{1}{2^{h+1}}\right), \,\,h \in \mathbb{N}
$$ 
with    $d \geq 1$ ($d$ will be fixed later) and, for every $R\in (0,R_0]$,   define 
$$
\rho_{h}:=\frac{R}{2}+\frac{R}{2^{h+1}}=\frac{R}{2}\left(1+\frac{1}{2^h}\right), \qquad 
\bar{\rho}_h:=\frac{\rho_{h}+\rho_{h+1}}{2}=\frac{R}{2}\left(1+\frac{3}{4\cdot 2^h}\right)
$$
and 
$$
J_{u^\alpha,h}:= \int _{A^\alpha_{k_{h},\rho_{h}}}(u^\alpha-k_h)^{p^*} \, dx.
$$
 Proposition \ref{iterazione}, applied to $u^\alpha$, gives 
\begin{equation}
\label{decadimento_per_J_h}
J_{u^\alpha,h+1}  \le 
 c(\vartheta,R)\left(2^{\frac{p^*p^*}{p}}\right)^h J^{\vartheta\frac{p^*}{p}}_{u^\alpha,h},
\end{equation}
\noindent
with the positive constant $c$ independent of $h$ and, by \eqref{equiv}, with the exponent $\vartheta \frac{p^*}{p}$ greater than $1$. 
Indeed, 
we notice that 
$q < p^* \frac{n}{p(n+1)}$ is equivalent to $\frac{q}{q-1} > \frac{p^*}{p^*-1} \frac{n}{p}$; therefore 
 \eqref{equiv} implies   
\[\frac{p}{p^*}<\min\left\{ 1-\frac{(p^*)^\prime}{q^\prime},1- \frac{1}{\tau_1}, 1- \frac{(p^*)'}{\tau_2}\right\} = \vartheta,\]
so we get $1 < \vartheta \frac{p^*}{p}$.




 Moreover, since \[J_{u^\alpha,0}=\int_{A^\alpha_{\frac{d}{2},R}}\left(u^\alpha-\frac{d}{2}\right)^{p^*}\,dx\to 0 \quad \text{as $d\to +\infty$},\]
 we can choose $d \geq 1$ large enough, so that 
 \[J_{u^\alpha,0} <
 c(\vartheta,R)^{-\frac{1}{\vartheta\frac{p^*}{p}-1}}\left(2^{\frac{p^*p^*}{p}}
 \right)^{-\frac{1}{(\vartheta\frac{p^*}{p}-1)^{2}}}.\] Therefore,  by Lemma \ref{lemma2},    
  $\lim_{h\to +\infty}J_{u^\alpha,h}=0$. Thus,  $u^\alpha\le d$ a.e. 
	in $B_{\frac{R}{2}}(x_0)$. 
We have so proved that $u^\alpha$ is  locally bounded from above. 

To prove that $u^\alpha$ is locally bounded from below, we can observe that 
$\tilde{u}=-u$ is a weak solution for 
\[
\displaystyle 
\sum_{i=1}^n\frac{\partial }{\partial x_i}\left(\tilde{A}_i^\alpha(x,Du(x))\right)=0, \quad 1\le \alpha\le m,
\] where $\tilde{A}(x,z)=-A(x,-z)$. It is easy to check that $\tilde{A}$ satisfies assumptions analogous to \eqref{(H1)} and \eqref{(H2)}. Therefore, by what previously proved, there exists $d'$ such that $\tilde{u}^\alpha=-u^\alpha\le d'$ a.e. in $B_{\frac{R}{2}}(x_0)$. 
 We have so proved that $u^\alpha\in L^{\infty}(B_{\frac{R}{2}}(x_0))$.  Due to the arbitrariness 
of $x_0$ and $R_0$, we get  $u^\alpha\in L^\infty_{\rm loc}(\Omega)$.

\section{Examples}
\label{s:example}

\subsection*{Example 1}

We consider example \eqref{struttura_policonvessa} that we rewrite for the convenience of the reader:
\begin{equation}
\label{riscrivo_struttura_policonvessa}
A_i^\alpha(z) = p |z|^{p-2} z_i^\alpha + h^\prime(\det z) (\operatorname{Cof} z)_i^\alpha,
\end{equation}
where $m=n$, $z \in \mathbb{R}^{n \times n}$, $\det z = \sum_{i=1}^{n} z _i^\alpha (\operatorname{Cof} z)_i^\alpha$; moreover, $h$ is convex, bounded from below and $C^1$. 
Exploiting the convexity of $h$, we get
\begin{equation}
\label{h_convex}
h(0) \geq h(t) + h^\prime(t) (0-t),
\end{equation}
so that
\begin{equation}
\label{h_convex_bis}
 h^\prime(t) t \geq  h(t) - h(0) \geq \inf\limits_{\mathbb{R}} h - h(0).
\end{equation}
Let us compute the indicator function for this choice of $A$: we get
\begin{eqnarray*}
\label{calcoli_indicatrice_policonvesso}
I_A(x,u,z) = \sum\limits_{i,\alpha,\beta} A_i^\alpha(z) z_i^\beta \frac{u^\alpha u^\beta}{|u|^2} =
\sum\limits_{i,\alpha,\beta} p |z|^{p-2} z_i^\alpha z_i^\beta \frac{u^\alpha u^\beta}{|u|^2} 
+
\sum\limits_{i,\alpha,\beta} h^\prime(\det z) (\operatorname{Cof} z)_i^\alpha z_i^\beta \frac{u^\alpha u^\beta}{|u|^2}
\\
=
p |z|^{p-2} \sum\limits_{i} \sum\limits_{\alpha} z_i^\alpha \frac{u^\alpha }{|u|}  \sum\limits_{\beta} z_i^\beta \frac{u^\beta }{|u|}
+
h^\prime(\det z) \sum\limits_{\alpha,\beta} \frac{u^\alpha u^\beta}{|u|^2} \sum\limits_{i} (\operatorname{Cof} z)_i^\alpha z_i^\beta
\\
=
p |z|^{p-2} \sum\limits_{i} \left(\sum\limits_{\alpha} z_i^\alpha \frac{u^\alpha }{|u|} \right)^2
+
h^\prime(\det z) \sum\limits_{\alpha} \frac{u^\alpha u^\alpha}{|u|^2} \sum\limits_{i} (\operatorname{Cof} z)_i^\alpha z_i^\alpha
\\
=
p |z|^{p-2} \sum\limits_{i} \left(\sum\limits_{\alpha} z_i^\alpha \frac{u^\alpha }{|u|} \right)^2
+
h^\prime(\det z) \det z 
\geq \inf\limits_{\mathbb{R}} h - h(0),
\end{eqnarray*}
where we used the property $\sum\limits_{i} (\operatorname{Cof} z)_i^\alpha z_i^\beta = 0$ if $\beta \neq \alpha$. When $h(0) = \inf\limits_{\mathbb{R}} h$, then strong Meier's condition \eqref{e:IAge0} is satisfied; if $h(0) > \inf\limits_{\mathbb{R}} h$, then weak Meier's condition \eqref{weak_Meier_condition} is verified with $\lambda = 1$, $d(x)=0$ and $g(x) = h(0) - \inf\limits_{\mathbb{R}} h$.
Now, let us verify componentwise coercivity \eqref{coercivita_su_ogni_componente}. We have
\begin{eqnarray*}
\label{calcoli_coercivita_policonvesso}
\sum\limits_{i} A_i^\alpha(z) z_i^\alpha  =
\sum\limits_{i} p |z|^{p-2} z_i^\alpha z_i^\alpha 
+
\sum\limits_{i} h^\prime(\det z) (\operatorname{Cof} z)_i^\alpha z_i^\alpha 
\\
=
p |z|^{p-2} |z^\alpha|^2 
+
h^\prime(\det z) \det z
\geq p |z^\alpha|^p + \inf\limits_{\mathbb{R}} h - h(0),
\end{eqnarray*}
provided $p \geq 2$; 
then \eqref{coercivita_su_ogni_componente} is verified with $\nu = p$, $a(x) = h(0) - \inf\limits_{\mathbb{R}} h$ and $b(x) = 0$.

\subsection*{Example 2}

We consider example \eqref{struttura_lineare_esclusa} that we rewrite for the convenience of the reader:
\begin{equation}
\label{riscrivo_struttura_lineare_esclusa}
A_i^\alpha(x,z) = \sigma^\alpha (x) z_i^\alpha,
\end{equation}
where $m=2$, $\sigma^1 (x) = 18 + 2\sin(|x|^2)$ and $\sigma^2 (x) = 2 + \sin(|x|^2)$. Since $\sigma^\alpha (x) \geq 1$, 
it is easy to check \eqref{coercivita_su_ogni_componente}:
\begin{eqnarray*}
\label{calcoli_coercivita_struttura_lineare_esclusa}
\sum\limits_{i} A_i^\alpha(x,z) z_i^\alpha  =
\sum\limits_{i} \sigma^\alpha (x) z_i^\alpha z_i^\alpha 
=
\sigma^\alpha (x) |z^\alpha|^2 
\geq |z^\alpha|^2;
\end{eqnarray*}
so, \eqref{coercivita_su_ogni_componente} is verified with $p=2$, $\nu = 1$, $a(x) = 0$ and $b(x) = 0$.
We are going to show that  \eqref{weak_Meier_condition} is not fulfilled. Indeed, we take $u^1=u^2=s>0$ with $s$ large enough (see \eqref{scelgo_s} later); 
moreover, we take $z_i^\alpha = 0$ if $i \geq 2$, $z_1^1=-s^2$, $z_1^2=2s^2$.
Then $|z|^2 = 5 s^4$, $|u|^2 = 2s^2$, $\frac{u^\alpha u^\beta}{|u|^2} = \frac{1}{2}$ and
\begin{eqnarray*}
\label{calcoli_non_Meier_struttura_lineare_esclusa}
\sum\limits_{i,\alpha,\beta} A_i^\alpha(x,z) z_i^\beta \frac{u^\alpha u^\beta}{|u|^2} =
\frac{1}{2} \sum\limits_{\alpha,\beta} A_1^\alpha(x,z) z_1^\beta = 
\frac{1}{2} \sum\limits_{\alpha} A_1^\alpha(x,z) \sum\limits_{\beta}z_1^\beta 
\\
=
\frac{1}{2} (\sigma^1(x) z_1^1 + \sigma^2(x) z_1^2) (z_1^1 + z_1^2) =
\frac{1}{2} (-\sigma^1(x) + 2 \sigma^2(x)) s^4 = -7s^4  \underbrace{<}_{0<\delta<1} - \delta 7 s^4 
\\
=
 - \delta s^4 \left\{ 5 + 1 + 1 \right\}
\underbrace{\leq}_{(*)} - \delta s^4 \left\{ 5 + \left(\frac{1}{\delta}\right)^{\lambda + 1} \frac{2 d(x)}{s^2} + \left(\frac{1}{\delta}\right)^{\lambda + 1} \frac{ g(x)}{s^4} \right\} 
\\
=
-  \left\{ \delta |z|^2 + \left(\frac{1}{\delta}\right)^{\lambda} \left[ d(x) |u|^2 + g(x) \right] \right\},
\end{eqnarray*}
where (*) is guaranteed by the choice of $s$ as follows
\begin{equation}
\label{scelgo_s}
s = \max 
\left\{ L;
\left[
\left(\frac{1}{\delta}\right)^{\lambda + 1} 2 d(x)\right]^{1/2}; 
\left[ \left(\frac{1}{\delta}\right)^{\lambda + 1}  g(x) \right]^{1/4}
\right\}.
\end{equation}

\subsection*{Example 3}

Let us consider example \eqref{struttura_prodotto} 
that we rewrite for the convenience of the reader:
\begin{equation}
\label{riscrivo_struttura_prodotto}
A^\alpha_i (z) =  2 z_i^\alpha +  h^\prime (z_1^1 z_1^2) z_1^{\hat{\alpha}} \delta_{i 1},
\end{equation}
where $m=2$, $\hat{\alpha} = 2$ if $\alpha=1$ and $\hat{\alpha} = 1$ if $\alpha=2$; moreover, $\delta_{i 1}=1$ if $i=1$ and $\delta_{i 1}=0$ otherwise. Here, $h$ is convex, $C^1$, bounded from below and $h^\prime (0) \leq -8$. For instance, 
\begin{equation}
\label{h=}
h(t) = 16 \sqrt{1 + (t-1)^2}.
\end{equation}
Let us first check \eqref{coercivita_su_ogni_componente}:
\begin{eqnarray*}
\label{calcoli_coercivita_prodotto}
\sum\limits_{i} A_i^\alpha(z) z_i^\alpha  =
\sum\limits_{i} 2 z_i^\alpha z_i^\alpha 
+
\sum\limits_{i} h^\prime(z_1^1 z_1^2) z_1^{\hat{\alpha}} \delta_{i 1} z_i^\alpha 
\\
=
2 |z^\alpha|^2 
+
h^\prime(z_1^1 z_1^2) 
  z_1^{\hat{\alpha}}  z_1^\alpha
=
2 |z^\alpha|^2 
+
h^\prime(z_1^1 z_1^2) z_1^1 z_1^2
\geq 2 |z^\alpha|^2 + \inf\limits_{\mathbb{R}} h - h(0),
\end{eqnarray*}
since $z_1^{\hat{\alpha}}  z_1^\alpha = z_1^1 z_1^2$; 
then \eqref{coercivita_su_ogni_componente} is verified with $\nu = 2$, $p=2$, $a(x) = h(0) - \inf\limits_{\mathbb{R}} h$ and $b(x) = 0$.
We are going to show that  \eqref{weak_Meier_condition} is not fulfilled. Indeed, we take $u^1=u^2=s>0$ with $s$ large enough (see \eqref{scelgo_s} as before); 
moreover, we take $z_1^2 = s^2$ and $z_i^\alpha = 0$ otherwise.
Then $|z|^2 = s^4$, $|u|^2 = 2s^2$, $\frac{u^\alpha u^\beta}{|u|^2} = \frac{1}{2}$ and
\begin{eqnarray*}
\label{calcoli_non_Meier_struttura_prodotto}
\sum\limits_{i,\alpha,\beta} A_i^\alpha(z) z_i^\beta \frac{u^\alpha u^\beta}{|u|^2} = 
\frac{1}{2} \sum\limits_{\alpha,\beta} A_1^\alpha(z) z_1^\beta = 
\frac{1}{2} \sum\limits_{\alpha} A_1^\alpha(z) \sum\limits_{\beta}z_1^\beta 
\\
=
\frac{1}{2} (2 z_1^2 + h^\prime (0) z_1^2) (z_1^2) =  
\frac{1}{2} (2 + h^\prime (0)) s^4 
\underbrace{\leq}_{h^\prime (0) \leq -8} \frac{1}{2} (2 - 8) s^4
=  -3s^4  \underbrace{<}_{0<\delta<1}  - \delta 3 s^4 
\\
=
 - \delta s^4 \left\{ 1 + 1 + 1 \right\}
\underbrace{\leq}_{(*)} - \delta s^4 \left\{ 1 + \left(\frac{1}{\delta}\right)^{\lambda + 1} \frac{2 d(x)}{s^2} + \left(\frac{1}{\delta}\right)^{\lambda + 1} \frac{ g(x)}{s^4} \right\} 
\\
=
-  \left\{ \delta |z|^2 + \left(\frac{1}{\delta}\right)^{\lambda} \left[ d(x) |u|^2 + g(x) \right] \right\},
\end{eqnarray*}
where (*) is guaranteed by the choice of $s$ \eqref{scelgo_s} as before.
In order to show that we can use Theorem \ref{t:boundedness}, we use formula \eqref{h=} 
and we select $n=3$. Then $|h^\prime (t)| \leq 16$ and we get
\begin{equation}
\label{stima_dall_alto_prodotto}
\sum\limits_{i=1}^{3} \left| A_i^\alpha(z) \right| \leq 54 |z|,
\end{equation}
so \eqref{(H2)} is satisfied with $q=2$, $M=54$ and $b(x)=0$, $\tau_2 = +\infty$. Note that previous calculations checked the validity of \eqref{(H1)} with $p=2$, $\nu = 2$, $a(x) = h(0) - \inf\limits_{\mathbb{R}} h = 16(\sqrt{2} - 1)$ and $\tau_1 = +\infty$. Since we selected $n=3$, $q=p=2$, then $\frac{3}{2} = p_0(n) < 2 = q= p < p^* \frac{n}{p(n+1)}$; this implies that \eqref{equiv} is satisfied and we can use our Theorem \ref{t:boundedness} and we get  
the following
\begin{corollary}
If $\Omega$ is a bounded open subset of $\mathbb{R}^3$, then 
all solutions $u \in W^{1,2}_{loc}(\Omega; \mathbb{R}^2)$ of system \eqref{dirichlet}, with $n=3$, $m=2$, \eqref{riscrivo_struttura_prodotto} and \eqref{h=}, are locally bounded in $\Omega$.
\end{corollary}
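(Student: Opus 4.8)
The plan is to deduce the corollary directly from Theorem~\ref{t:boundedness}, by checking that the system \eqref{dirichlet} with $n=3$, $m=2$ and coefficients \eqref{riscrivo_struttura_prodotto}, $h$ as in \eqref{h=}, fulfils all the hypotheses of that theorem with the choices $p=q=2$ and $\tau_1=\tau_2=+\infty$. Since $A_i^\alpha$ in \eqref{riscrivo_struttura_prodotto} depends only on $z=Du$, and not on $x$ or $u$, it is trivially a Carath\'eodory function, so the only work is to verify the growth assumptions \eqref{(H1)}, \eqref{(H2)} and the exponent inequalities \eqref{equiv}. Note also that a weak solution $u\in W^{1,2}_{\rm loc}(\Omega;\mathbb{R}^2)$ is precisely a weak solution in $W^{1,q}_{\rm loc}$ with $q=2$, so the functional setting matches that of Theorem~\ref{t:boundedness}.

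First I would record the relevant properties of $h(t)=16\sqrt{1+(t-1)^2}$: it is $C^1$ and convex on $\mathbb{R}$, bounded from below with $\inf_{\mathbb{R}}h=16$ (attained at $t=1$) and $h(0)=16\sqrt{2}$, and its derivative satisfies $|h'(t)|\le 16$ for every $t$. The convexity inequality \eqref{h_convex_bis} then gives $h'(t)\,t\ge \inf_{\mathbb{R}}h-h(0)=16(1-\sqrt{2})$ for all $t\in\mathbb{R}$; this is the one step that is not purely mechanical, and it has already been carried out in Example~3. Using it exactly as there, for every $z\in\mathbb{R}^{2\times 3}$ and every $\alpha\in\{1,2\}$,
\[
\sum_{i=1}^{3}A_i^\alpha(z)\,z_i^\alpha = 2|z^\alpha|^2 + h'(z_1^1 z_1^2)\,z_1^1 z_1^2 \ \ge\ 2|z^\alpha|^2 - 16(\sqrt{2}-1),
\]
so \eqref{(H1)} holds with $\nu=2$ and $a(x)\equiv 16(\sqrt{2}-1)$, a nonnegative constant, hence $a\in L^\infty_{\rm loc}(\Omega)$ and we may take $\tau_1=+\infty$. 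For \eqref{(H2)} I would combine $|h'|\le 16$ with the crude bound \eqref{stima_dall_alto_prodotto}, namely $\sum_{i=1}^{3}|A_i^\alpha(z)|\le 54|z|$, which gives \eqref{(H2)} with $q=2$, $M=54$, $b\equiv 0$ and $\tau_2=+\infty\ge q/(q-1)$.

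Finally I would check \eqref{equiv} with $n=3$, $p=q=2$. Here $1<p<n$, the Sobolev exponent is $p^*=\frac{np}{n-p}=6$, and
\[
p^*\,\frac{n}{p(n+1)} = 6\cdot\frac{3}{2\cdot 4} = \frac{9}{4} > 2 = q,
\]
while $\tau_1=+\infty>\frac{n}{p}=\frac{3}{2}$ and $\tau_2=+\infty\ge\frac{q}{q-1}=2$; thus every condition in \eqref{equiv} is satisfied. Applying Theorem~\ref{t:boundedness} then yields $u\in L^\infty_{\rm loc}(\Omega)$, which is the assertion. There is no genuine obstacle here: the argument is a verification of hypotheses, the only mildly nontrivial point being the convexity-based lower bound on the cross term $h'(z_1^1 z_1^2)\,z_1^1 z_1^2$, which is exactly the componentwise coercivity \eqref{coercivita_su_ogni_componente} already established in Example~3.
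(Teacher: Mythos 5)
Your verification is correct and follows the paper's own route exactly: establish \eqref{(H1)} with $\nu=2$, $a\equiv 16(\sqrt2-1)$, $\tau_1=+\infty$ via the convexity bound $h'(t)t\ge \inf_{\mathbb R}h-h(0)$, establish \eqref{(H2)} with $q=2$, $M=54$, $b\equiv 0$, $\tau_2=+\infty$ from $|h'|\le 16$, check \eqref{equiv} (indeed $q=2<9/4=p^*\tfrac{n}{p(n+1)}$), and invoke Theorem~\ref{t:boundedness}. Nothing is missing.
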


\subsection*{Example 4}

Let us consider example \eqref{struttura_prodotto_p} that we rewrite for the convenience of the reader: 
\begin{equation}
\label{riscrivo_struttura_prodotto_p}
A^\alpha_i (x,z) =  \sigma^\alpha (x) p |z|^{p-2} z_i^\alpha +  h^\prime (z_1^1 z_1^2) z_1^{\hat{\alpha}} \delta_{i 1},
\end{equation}
where $m=2$, $\sigma^1 (x) = 48 + 3\sin(|x|^2)$ and $\sigma^2 (x) = 2 + \sin(|x|^2)$, $\hat{\alpha} = 2$ if $\alpha=1$ and $\hat{\alpha} = 1$ if $\alpha=2$; moreover, $\delta_{i 1}=1$ if $i=1$ and $\delta_{i 1}=0$ otherwise. Here, $2 \leq p$, $h$ is convex, $C^1$, bounded from below and $h^\prime (0) \leq 0$. For instance, 
\begin{equation}
\label{h=bis}
h(t) =  \left( 1 + t^2 \right)^{p/4}.
\end{equation}
Let us first check \eqref{coercivita_su_ogni_componente}; since $\sigma^\alpha (x) \geq 1$ and $z_1^{\hat{\alpha}}  z_1^\alpha = z_1^1 z_1^2$,
\begin{eqnarray*}
\label{calcoli_coercivita_prodotto_p}
\sum\limits_{i} A_i^\alpha(x,z) z_i^\alpha  =
\sum\limits_{i} \sigma^\alpha (x) p |z|^{p-2} z_i^\alpha z_i^\alpha 
+
\sum\limits_{i} h^\prime(z_1^1 z_1^2) z_1^{\hat{\alpha}} \delta_{i 1} z_i^\alpha 
\\
=
\sigma^\alpha (x) p |z|^{p-2} |z^\alpha|^2 
+
h^\prime(z_1^1 z_1^2) 
  z_1^{\hat{\alpha}}  z_1^\alpha
=
\sigma^\alpha (x) p |z|^{p-2} |z^\alpha|^2 
+
h^\prime(z_1^1 z_1^2) z_1^1 z_1^2
\geq p |z^\alpha|^p + \inf\limits_{\mathbb{R}} h - h(0),
\end{eqnarray*}
then \eqref{coercivita_su_ogni_componente} is verified with $\nu = p$, $a(x) = h(0) - \inf\limits_{\mathbb{R}} h$ and $b(x) = 0$.
We are going to show that  \eqref{weak_Meier_condition} is not fulfilled. Indeed, we take $u^1=u^2=s>0$ with $s$ large enough (see \eqref{scelgo_s_bis} later); 
moreover, we take $z_1^1 = s^2$, $z_1^2 = 0$, $z_2^1 = -2 s^2$, $z_2^2 = 3 s^2$ and $z_i^\alpha = 0$ otherwise.
Then $|z|^2 = 14 s^4$, $|u|^2 = 2s^2$, $\frac{u^\alpha u^\beta}{|u|^2} = \frac{1}{2}$ and
\begin{eqnarray*}
\label{calcoli_non_Meier_struttura_prodotto_p}
\sum\limits_{i,\alpha,\beta} A_i^\alpha(x,z) z_i^\beta \frac{u^\alpha u^\beta}{|u|^2} = 
\frac{1}{2} \sum\limits_{i,\alpha,\beta} A_i^\alpha(x,z) z_i^\beta = 
\frac{1}{2} \sum\limits_{i} \sum\limits_{\alpha} A_i^\alpha(x,z) \sum\limits_{\beta}z_i^\beta 
\\
=
\frac{1}{2} \sum\limits_{i} \sum\limits_{\alpha} \sigma^\alpha (x) p |z|^{p-2} z_i^\alpha \sum\limits_{\beta}z_i^\beta
+
\frac{1}{2} \sum\limits_{i} \sum\limits_{\alpha} h^\prime(z_1^1 z_1^2) z_1^{\hat{\alpha}} \delta_{i 1} \sum\limits_{\beta}z_i^\beta
\\
=
\frac{1}{2} \sum\limits_{i} \sum\limits_{\alpha} \sigma^\alpha (x) p |z|^{p-2} z_i^\alpha \sum\limits_{\beta}z_i^\beta
+
\frac{1}{2}  \sum\limits_{\alpha} h^\prime(z_1^1 z_1^2) z_1^{\hat{\alpha}}  \sum\limits_{\beta}z_1^\beta
\\
=
\frac{p}{2} |z|^{p-2} \sum\limits_{i} 
\left[
\sigma^1 (x) z_i^1 + \sigma^2 (x) z_i^2 
\right] 
\left[
z_i^1 + z_i^2
\right]
+
\frac{1}{2}  
h^\prime(z_1^1 z_1^2)
\left[
z_1^2 + z_1^1
\right]  
\left[
z_1^1 + z_1^2
\right]
\\
=
\frac{p}{2}  |z|^{p-2}  
\left\{
\left[
\sigma^1 (x) z_1^1 + \sigma^2 (x) z_1^2 
\right] 
\left[
z_1^1 + z_1^2
\right]
+  
\left[
\sigma^1 (x) z_2^1 + \sigma^2 (x) z_2^2 
\right] 
\left[
z_2^1 + z_2^2
\right]
\right\}
+
\frac{1}{2}  
h^\prime(z_1^1 z_1^2) 
\left[
z_1^1 + z_1^2
\right]^2
\\
=
\frac{p}{2}  |z|^{p-2}  
\left\{ 
\sigma^1 (x) s^4 
+  
\left[
- 2
\sigma^1 (x)  + 3 \sigma^2 (x)  
\right] s^4
\right\}
+
\frac{1}{2}  
h^\prime(0) s^4
\\
\underbrace{\leq}_{h^\prime (0) \leq 0}
\frac{p}{2}  |z|^{p-2} s^4 
\left\{ -
\sigma^1 (x)  
 + 3 \sigma^2 (x)  
\right\}
= 
\frac{p}{28}  |z|^{p}  
\left\{ -
\sigma^1 (x)  
 + 3 \sigma^2 (x)  
\right\}
= - 3 \frac{p}{2}  |z|^{p}
\\ 
\underbrace{\leq}_{2 \leq p} 
- 3 |z|^{p}
\underbrace{<}_{0<\delta<1}  - \delta |z|^{p} 3  
=
 - \delta |z|^{p} \left\{ 1 + 1 + 1 \right\}
\\
\underbrace{\leq}_{(**)}  - \delta |z|^{p} \left\{ 1 + \left(\frac{1}{\delta}\right)^{\lambda + 1}  d(x) \frac{1}{(\sqrt{7} s)^p} + \left(\frac{1}{\delta}\right)^{\lambda + 1} \frac{ g(x)}{(\sqrt{14} s^2)^p} \right\} 
\\
= 
-  \left\{ \delta |z|^p + \left(\frac{1}{\delta}\right)^{\lambda} \left[ d(x) |u|^p + g(x) \right] \right\},
\end{eqnarray*}
where (**) is guaranteed by the choice \eqref{scelgo_s_bis} of $s$  as follows.
\begin{equation}
\label{scelgo_s_bis}
s= \max 
\left\{
L; \left[ \left(\frac{1}{\delta}\right)^{\lambda + 1}   \frac{d(x)}{(\sqrt{7})^p} \right]^{1/p}; 
\left[\left(\frac{1}{\delta}\right)^{\lambda + 1} \frac{ g(x)}{(\sqrt{14})^p} \right]^{1/(2p)}
\right\}.
\end{equation}
In order to show that we can use Theorem \ref{t:boundedness}, we use formula \eqref{h=bis},  
we select $n=3$ and we require $p<3=n$. Then $|h^\prime (t)| \leq \frac{p}{2} \left( 1 + t^2 \right)^\frac{p-2}{4}$ and we get
\begin{eqnarray*}
\label{stima_dall_alto_prodotto_p}
\sum\limits_{i=1}^{3} \left| A_i^\alpha(x,z) \right| \leq 3 \sigma^\alpha (x) p |z|^{p-1} + 3 \frac{p}{2} \left( 1 + |z|^4 \right)^\frac{p-2}{4} |z|
\leq 153 p |z|^{p-1} + 3 \frac{p}{2} \left( 1 + |z|^4 \right)^{\frac{p-1}{4}}
\\
\leq 153 p |z|^{p-1} + 3 \frac{p}{2} 2^{\frac{p-1}{4}}\left( 1 + |z|^{p-1} \right) 
\leq p( 153 + 2^{1+\frac{p-1}{4}}) (|z|^{p-1} + 1),
\end{eqnarray*}
so \eqref{(H2)} is satisfied with $q=p$, $M=p( 153 + 2^{1+\frac{p-1}{4}})$ and $b(x)=1$, $\tau_2 = +\infty$. Note that previous calculations checked the validity of \eqref{(H1)} with $\nu = p$, $a(x) = h(0) - \inf\limits_{\mathbb{R}} h = 0$ and $\tau_1 = +\infty$. Since we selected $n=3$, $q=p\in [2, 3)$, then $\frac{3}{2} = p_0(n) < 2 \leq q = p < p^* \frac{n}{p(n+1)}$; this implies that \eqref{equiv} is satisfied and we can use our Theorem \ref{t:boundedness} and we get  
the following
\begin{corollary}
If $\Omega$ is a bounded open subset of $\mathbb{R}^3$, then 
all solutions $u \in W^{1,p}_{loc}(\Omega; \mathbb{R}^2)$ of system \eqref{dirichlet}, with $2 \leq p < 3 =n$, $m=2$, \eqref{riscrivo_struttura_prodotto_p} and \eqref{h=bis}, are locally bounded in $\Omega$.
\end{corollary}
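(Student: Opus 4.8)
The plan is to obtain the statement as a direct application of Theorem \ref{t:boundedness}. Thus the whole task reduces to checking that, for the operator defined in \eqref{riscrivo_struttura_prodotto_p} with $h$ as in \eqref{h=bis}, taken with $n=3$, $m=2$ and $2\le p<3$, the hypotheses \eqref{(H1)}, \eqref{(H2)} and the restrictions \eqref{equiv} are all satisfied. These verifications are precisely the computations displayed just before the statement; I would organize them as follows.

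First I would check the lower bound \eqref{(H1)}. Since $h$ in \eqref{h=bis} is convex, $C^1$ and minimized at $t=0$ (with $h(0)=\inf_{\mathbb{R}}h=1$ here), convexity gives $h'(t)\,t\ge h(t)-h(0)\ge 0$. Multiplying row $\alpha$ by $z_i^\alpha$, summing over $i$, and using $\sigma^\alpha(x)\ge 1$, $|z^\alpha|\le|z|$ together with the identity $z_1^{\hat\alpha}z_1^\alpha=z_1^1z_1^2$, one gets $\sum_i A_i^\alpha(x,z)z_i^\alpha=\sigma^\alpha(x)\,p|z|^{p-2}|z^\alpha|^2+h'(z_1^1z_1^2)\,z_1^1z_1^2\ge p|z^\alpha|^p$. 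Hence \eqref{(H1)} (equivalently the componentwise coercivity \eqref{coercivita_su_ogni_componente}) holds with $\nu=p$, $a\equiv 0$, so one may take $\tau_1=+\infty$.

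Next I would check the upper bound \eqref{(H2)}: from \eqref{h=bis}, $|h'(t)|\le\frac p2(1+t^2)^{(p-2)/4}$, and bounding $\sigma^\alpha(x)$ and $(z_1^1z_1^2)^2\le|z|^4$, elementary inequalities (using $p<3$, so that $(p-1)/4<1$) give $\sum_{i=1}^3|A_i^\alpha(x,z)|\le M(|z|^{p-1}+1)$ with $q=p$, $b\equiv 1$, $\tau_2=+\infty\ (\ge q/(q-1))$, and an explicit $M=M(p)$. Finally, \eqref{equiv} holds: $\tau_1=\tau_2=+\infty$ makes $\tau_1>n/p$ and $\tau_2\ge q/(q-1)$ automatic; $1<p<n=3$ is assumed; and $q<p^*\frac n{p(n+1)}$ reduces, for $q=p$ and $n=3$, to $(2p-3)^2>0$, which is true since $p\ge 2>3/2$. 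Applying Theorem \ref{t:boundedness} then yields local boundedness of any weak solution $u\in W^{1,p}_{\rm loc}(\Omega;\mathbb{R}^2)$, which is the claim.

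The only genuinely structural point — and the reason the statement is not covered by earlier results — is the coercivity step: it hinges on the convexity of $h$ and on the identity $z_1^{\hat\alpha}z_1^\alpha=z_1^1z_1^2$, which turns the off-component cross term into the full product $h'(z_1^1z_1^2)z_1^1z_1^2$. I expect no other difficulty, but it is worth recording that this same operator fails Meier's condition \eqref{weak_Meier_condition} when $h'(0)\le 0$, so one genuinely needs Theorem \ref{t:boundedness}, built on componentwise coercivity rather than on the indicator function.
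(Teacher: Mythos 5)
Your proposal is correct and follows essentially the same route as the paper: verify the componentwise coercivity \eqref{(H1)} via convexity of $h$ and the identity $z_1^{\hat\alpha}z_1^\alpha=z_1^1z_1^2$, verify \eqref{(H2)} with $q=p$, $b\equiv1$, $\tau_1=\tau_2=+\infty$, reduce \eqref{equiv} to $(2p-3)^2>0$ for $n=3$, and invoke Theorem \ref{t:boundedness}. All the individual estimates (in particular $|h'(t)|\le\frac p2(1+t^2)^{(p-2)/4}$ and the use of $p\ge2$ in $\sigma^\alpha(x)p|z|^{p-2}|z^\alpha|^2\ge p|z^\alpha|^p$) match those in the paper's Example 4.
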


\end{document}